\newcommand{\RR}{\mathbb{R}}
\newcommand{\PP}{\mathbb{P}}
\newcommand{\pn}{\PP^n}
\newcommand{\ps}{\PP^*}
\newcommand{\xr}{X(\RR)}
\newtheorem{thm}{Theorem}
\newtheorem{lem}[thm]{Lemma}
\newtheorem{cor}[thm]{Corollary}
\newtheorem{prop}[thm]{Proposition}
\newtheorem{conj}[thm]{Conjecture}
\theoremstyle{definition}
\newtheorem{dfn}[thm]{Definition}
\begin{document}

\subjclass[2020]{14H50 14P05 14P25 52A10 52A15 52A20}
\keywords{real algebraic curve; bitangent; tritangent; support hyperplane; convex set}

\title[On a question of supports]{On a question of supports}

\author{Frédéric Mangolte and Christophe Raffalli}

\address{\emph{Fr\'ed\'eric Mangolte}
	\newline
	\textnormal{Aix Marseille Univ, CNRS, I2M, Marseille, France}
	\newline
	\textnormal{\texttt{frederic.mangolte@univ-amu.fr}}
	}
\address{\emph{Christophe Raffalli}
	\newline
	\textnormal{\texttt{christophe@raffalli.eu}}
	}

\thanks{Translated by Egor Yasinsky and Susanna Zimmermann from a preprint originally written in French.}

\begin{abstract}
We give a sufficient condition in order that $n$ closed connected subsets in the $n$-dimensional real projective space admit a common multitangent hyperplane.
\end{abstract}

\maketitle

\section{Introduction}

The motivation for the present note is a step in the proof of the following statements \cite[Corollary~5.5 and Theorem~6.1]{jm04} or \cite[§5.3]{ma-book,ma-book-en}:

\begin{thm}
\label{thm.dp2}
	Let $X$ be a real del Pezzo surface of degree 2 such that $X(\RR)$ is homeomorphic to the disjoint union of 4 spheres. Then a smooth map $f\colon \xr\to\mathbb{S}^2$ can be approximated by regular maps if and only if its topological degree is even.
\end{thm}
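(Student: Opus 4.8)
The plan is to pass from the approximation problem to a homotopy classification, and then to the real enumerative geometry of the branch quartic. Recall that a del Pezzo surface of degree $2$ comes with its anticanonical double cover $\pi\colon X\to\PP^2$ branched along a smooth real quartic $C$, and that the hypothesis on $\xr$ forces $C(\RR)$ to consist of four ovals bounding four pairwise disjoint closed regions $D_1,\dots,D_4\subset\PP^2(\RR)$, over each of which $\pi$ restricts to the double cover of a disk branched along its boundary, producing one of the four spheres. By the approximation machinery recalled in \cite{jm04}, a smooth map $f\colon\xr\to\mathbb{S}^2$ is approximable by regular maps if and only if it is homotopic to a regular one, so the problem reduces to deciding which homotopy classes contain a regular representative. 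Since $\xr$ is a closed oriented surface with four spherical components, Hopf's theorem gives $[\xr,\mathbb{S}^2]\cong\mathbb{Z}^4$ through the component degrees $(d_1,\dots,d_4)$, the topological degree being $\sum_i d_i$. I would first note that although each $d_i$ depends on a choice of orientation of its component, the parity of $\sum_i d_i$ does not, which already singles this parity out as the natural invariant.

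Next I would treat necessity, namely that every regular map $g\colon\xr\to\mathbb{S}^2$ has even total degree. The efficient way to expose the algebraic constraint is to present $\mathbb{S}^2$ as the real locus of the smooth quadric whose complexification is $\PP^1\times\PP^1$ with conjugate rulings, so that $g$ is encoded by a complex-conjugate pair of pencils $(|D|,|\overline D|)$ on $X_{\mathbb{C}}$ and its degree is governed by the self-pairing $D\cdot\overline D$. On this del Pezzo surface that pairing has even parity; this is exactly the cohomological computation carried out in \cite{jm04}, which I would quote. I expect this half to be formal rather than the crux.

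The substance is sufficiency: realizing every class of even total degree. These classes form the index-two sublattice of $\mathbb{Z}^4$ generated by the vectors $e_i+e_j$, and the regularly realizable classes form a subgroup (the sum of two regular maps being again regular, via an algebraic pinching), so it suffices to realize, for each pair $\{i,j\}$, a map of bidegree $e_i+e_j$. Here the classical correspondence between the $28$ bitangents of a plane quartic and the $56$ exceptional curves of its associated degree-$2$ del Pezzo enters: a real line tangent to $C$ with its two contacts on distinct ovals $C_i,C_j$ lifts under $\pi$ to a complex-conjugate pair of exceptional curves meeting transversally at the two real points lying over the tangencies. Taking one of them as $D$ produces the conjugate pencils of the previous paragraph, hence a regular map $\xr\to\mathbb{S}^2$ whose degree is concentrated on the spheres over $D_i$ and $D_j$ and equals $1$ on each; assembling these over all pairs generates the whole even sublattice.

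The main obstacle is therefore the existence, for each prescribed pair, of a real line simultaneously tangent to the two ovals $C_i,C_j$ and leaving the two remaining ovals to one side, that is, of a common support line of the closed connected regions $D_i,D_j\subset\PP^2(\RR)$. The classical count only controls such bitangents over $\mathbb{C}$; producing them over $\RR$ for the actual real configuration is precisely the ``question of supports'' solved in the present note. I would invoke its conclusion in the case $n=2$, applied to the pairs of closed connected sets $D_i,D_j$, to obtain the required real bitangents, feed them into the construction above, and thereby complete the proof.
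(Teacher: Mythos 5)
This paper does not actually prove Theorem~\ref{thm.dp2}: the introduction states explicitly that the proofs of Theorems~\ref{thm.dp2} and \ref{thm.dp1} are to be found in \cite{jm04} and in \cite{ma-book,ma-book-en}, and the note's own contribution to those proofs is confined to Proposition~\ref{prop.multitangent}, the existence of a real line tangent to any two prescribed ovals of the branch quartic. So there is no in-paper argument to compare yours against step by step. What can be said is that your reconstruction has the right architecture (the one of Joglar--Koll\'ar): anticanonical double cover of $\PP^2$ branched along a quartic with four ovals, reduction of approximation to homotopy, Hopf classification by component degrees, necessity via the parity of $D\cdot\overline{D}$ for the conjugate pencils encoding a map to the quadric, and sufficiency via real bitangents lifting to conjugate pairs of exceptional curves. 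Crucially, you locate correctly the single point where this note intervenes: the existence, for each pair of ovals $C_i,C_j$, of a real common support line, which is exactly Proposition~\ref{prop.multitangent} for $n=2$ (equivalently Theorem~\ref{thm.main} applied to the two closed disks $D_i,D_j$, the point $p$ being taken on one of the two remaining ovals so that B\'ezout forbids a line through $p$ from meeting both $D_i$ and $D_j$).

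That said, as a self-contained proof your text has several load-bearing steps that are only asserted and delegated to \cite{jm04}: (i) the equivalence ``approximable by regular maps $\Leftrightarrow$ homotopic to a regular map''; (ii) the fact that the set of homotopy classes admitting regular representatives is a subgroup of $\mathbb{Z}^4$ (``algebraic pinching'' needs a genuine construction); (iii) the parity computation giving necessity; and (iv) the verification that the two $(-1)$-curves over a real bitangent with contacts on distinct ovals are exchanged by complex conjugation and that the resulting regular map has degree $1$ on exactly the two corresponding spheres. Point (iv) deserves a word of care: it uses that the real locus of $X$ lies only over the four disks, so that a support line, meeting no disk outside its two tangency points, has no real points of $X$ above its generic real point, forcing the two exceptional curves to be conjugate rather than individually real. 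None of these gaps concerns the content of the present note, but they are where the actual work of \cite{jm04} lies; filling them is necessary before the argument can be called a proof rather than an accurate roadmap.
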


\begin{thm}
\label{thm.dp1}
	Let $X$ be a real del Pezzo surface of degree 1 such that $X(\RR)$ is homeomorphic to the disjoint union of 4 spheres and a projective plane. Then every smooth map $f\colon \xr\to\mathbb{S}^2$ can be approximated by regular maps.
\end{thm}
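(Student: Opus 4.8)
The plan is to deduce Theorem~\ref{thm.dp1} from the general theory of approximation of smooth maps into $\mathbb{S}^2$ developed in \cite{jm04}, feeding into it the existence of multitangent hyperplanes established in this note. First I would pin down the target of the classification. As $\xr$ is the disjoint union of four $2$-spheres $S_1,\dots,S_4$ and one real projective plane $P$, and as $\mathbb{S}^2$ is $1$-connected, obstruction theory identifies $[\xr,\mathbb{S}^2]$ with $H^2(\xr;\mathbb{Z})\cong\mathbb{Z}^4\oplus\mathbb{Z}/2$: the four integers are the degrees of $f|_{S_i}$ and the $\mathbb{Z}/2$ is the primary obstruction $H^2(P;\pi_2(\mathbb{S}^2))$ attached to $f|_P$. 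By the criterion of \cite{jm04}, approximability of $f$ by regular maps depends only on its homotopy class and holds exactly for the classes in the subset $\mathcal R\subseteq[\xr,\mathbb{S}^2]$ realised by regular maps. Since $\mathcal R$ can be checked to be a subgroup (the group law corresponds to tensoring the associated complex line bundles, an operation that preserves algebraicity), the whole statement reduces to the surjectivity $\mathcal R=\mathbb{Z}^4\oplus\mathbb{Z}/2$, i.e.\ to realising each of the five generators by a regular map.

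Second, I would turn the realisation of a generator into the construction of an algebraic curve on $X$ with prescribed real topology. After fixing a projective embedding of $X$ in which the components of $\xr$ appear as closed connected sets in an affine chart, a pencil of hyperplane sections produces, upon restriction to the real locus and resolution of its base points, a regular map whose behaviour over a generic value is governed by how the members of the pencil meet each component. Realising the class $(d_1,\dots,d_4;\varepsilon)$ thus amounts to producing sections that cut the prescribed components with the prescribed multiplicities while avoiding the others. The elementary building blocks are curves that hug a single component, and this is exactly what a support hyperplane delivers: a hyperplane tangent to a component from outside isolates, in the neighbouring sections, a small oval enclosing the chosen locus and nothing else.

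Third, and here is the crux, I would use the multitangent hyperplane to produce the generators that are obstructed on the degree $2$ surface. On a degree $2$ del Pezzo surface, whose real locus consists of four spheres only, the presentation of $X$ as a double cover of $\PP^2$ branched along a quartic forces a parity constraint on the degrees of regular maps --- precisely the one recorded in Theorem~\ref{thm.dp2} --- so that $\mathcal R$ is then a proper subgroup. On the degree $1$ surface the extra component $P$ is what removes this constraint. Concretely, I would apply the main theorem of this note to a suitable subconfiguration of the components placed in a low-dimensional projective model --- a pair or a triple, yielding a bitangent or tritangent hyperplane --- and use the associated singular section, algebraically smoothed, as the curve realising an odd generator; the component $P$ enters the tangency configuration precisely so that the resulting section carries the odd multiplicity that was unavailable in the degree $2$ case.

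The step I expect to be the main obstacle is the degree bookkeeping that underlies the second and third paragraphs. One must check that the regular map built from a multitangent section has exactly the intended degree on each sphere and the intended value on $P$, with no parasitic contribution on the remaining components, and that the tangential section can be smoothed by an algebraic deformation that preserves this intersection pattern; verifying the hypotheses of the multitangent theorem --- that the chosen components are closed, connected and in the convex position it requires inside the chosen chart --- is the delicate geometric input that this note is designed to supply. Once each of the five generators is realised, the subgroup property gives $\mathcal R=\mathbb{Z}^4\oplus\mathbb{Z}/2$, and Theorem~\ref{thm.dp1} follows.
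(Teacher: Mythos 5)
The paper does not prove Theorem~\ref{thm.dp1} at all: it is quoted as motivation, and the introduction explicitly defers its proof to the cited references (\cite{jm04} and \cite[\S 5.3]{ma-book,ma-book-en}). The only contribution of this note to that proof is Proposition~\ref{prop.multitangent}, i.e.\ the existence of a hyperplane tangent to $n$ prescribed connected components of a smooth real curve of degree $2n$ in $\PP^n$ (for $n=3$, a tritangent plane to a space sextic lying on a quadric cone --- the branch curve of the degree~1 del Pezzo surface viewed as a double cover of the cone). So there is no proof in the paper to compare yours against, and what you have written cannot stand in for one.

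As a self-contained argument your proposal has genuine gaps at every load-bearing step. The claim that the set $\mathcal R$ of regularly realisable homotopy classes is a subgroup of $[\xr,\mathbb{S}^2]$ is asserted via ``tensoring line bundles,'' but $[\xr,\mathbb{S}^2]$ is not naturally the group of line bundles on a $2$-complex with a non-orientable piece, and the approximation criterion of \cite{jm04} is not simply ``realise the generators'': one must control the invariants component by component, which is exactly where the parity obstruction of Theorem~\ref{thm.dp2} lives. More importantly, you misplace the tangency: the multitangent hyperplane of Proposition~\ref{prop.multitangent} is tangent to components of the \emph{branch sextic} on the quadric cone in $\PP^3$, not to components of $\xr$ itself, and the component $P\cong\PP^2(\RR)$ of $\xr$ does not ``enter the tangency configuration'' --- rather, the tritangent plane section pulls back to a curve on $X$ whose real locus has prescribed intersection behaviour with the four spheres, and the subsequent smoothing and degree bookkeeping is precisely the content of \cite{jm04} that you would need to reproduce. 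As written, the proposal is a plausible reading plan for \cite{jm04}, not a proof.
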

In the statements above $\mathbb{S}^2\subset \mathbb{R}^{3}$ is the real locus of the quadric $x_1^2+x_2^2+x_3^2=1$ and a \emph{regular map} is only regular on real algebraic loci, see \cite[Definitions~1.2.54 and 1.3.4]{ma-book,ma-book-en} for details.

One key point in the proof of the former statements was the existence of a bitangent line to any pair of connected components of a plane quartic and the existence of a tritangent conic to any triple of connected components of certain space sextic. To be precise we need the following:

\begin{prop}
\label{prop.multitangent}
Let $n=2,3$ and $X\subset \PP^n$ be a smooth real algebraic curve of degree $2n$ whose real locus $\xr$ has at least $n+1$ connected components. If $n=3$, assume furthermore that $X$ lies on a singular quadric.

Choose $n$ connected components $\Omega_1,\dots,\Omega_n$ of $\xr$.
Then there exists
a hyperplane of $\PP^n(\RR)$ which is tangent to $\Omega_i$ for all $1\leqslant i \leqslant n$.
\end{prop}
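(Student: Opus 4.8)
The plan is to realize a multitangent hyperplane as a common \emph{support} hyperplane of the chosen components and to produce it by a convexity argument, which is what the title and keywords suggest. The starting observation is that every connected component of $\xr$ is a compact smooth embedded circle, and that for $n=2$, since $X$ has even degree, it is moreover an \emph{oval} bounding an embedded disk. In particular each $\Omega_i$ is compact and, once placed in a suitable affine chart, has a compact convex hull whose extreme points lie on $\Omega_i$. The crucial reformulation is that a hyperplane $H$ is tangent to $\Omega_i$ as soon as $H$ is a support hyperplane of $\mathrm{conv}(\Omega_i)$ meeting it at an extreme point: since $\Omega_i$ is smooth, such a support hyperplane is the tangent hyperplane to $\Omega_i$ there. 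Moreover, by Bézout a hyperplane touching $\Omega_1,\dots,\Omega_n$ each with contact $\geqslant 2$ already accounts for $2n$ intersection points with $X$, so by degree $2n$ the contacts are exactly simple tangencies and $H$ meets no other part of the curve.

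First I would treat $n=2$, where the hypothesis ``at least $n+1=3$ components'' is decisive. A smooth plane quartic can carry a nest (one oval inside another) only when it has exactly two real ovals, so the presence of a third component forces all ovals—in particular $\Omega_1,\Omega_2$—to be mutually unnested. Consequently neither of the convex hulls $C_1=\mathrm{conv}(\Omega_1)$, $C_2=\mathrm{conv}(\Omega_2)$ contains the other, and after removing a real line disjoint from $\Omega_1\cup\Omega_2$ (which exists precisely because the two ovals are unnested) one obtains an affine chart in which the two bodies admit outer common tangent lines: the two ``bridges'' of $\mathrm{conv}(C_1\cup C_2)$. Each bridge supports $C_1$ and $C_2$ at extreme points, which lie on $\Omega_1$ and $\Omega_2$; as the ovals are smooth, the line is tangent to each, yielding the sought bitangent.

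For $n=3$ the same philosophy applies, but the existence of a common tangent plane to three convex bodies in $\RR^3$ is no longer produced by an elementary bridge construction, and this is exactly where the hypothesis that $X$ lies on a singular quadric $Q$ enters. Intersecting a plane $H$ with $Q$ produces a conic, so a plane tritangent to $X$ is the same as a conic on $Q$ tangent to the three components—this is the \emph{tritangent conic} of the introduction. The cone structure of $Q$ organizes the candidate planes along the one-parameter family of generators (equivalently along the base conic $\RR\PP^1$), cutting down the effective number of degrees of freedom; along this family I would run a support/intermediate-value argument, matching the three support values of $\Omega_1,\Omega_2,\Omega_3$, to produce a plane simultaneously supporting all three. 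As in the planar case, the condition of at least $n+1=4$ components rules out the bad nested configurations on $Q$, and the degree-$6$ Bézout bound then forces the three contacts to be genuine simple tangencies and forbids spurious intersections.

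I expect the main obstacle to be the $n=3$ step. Two difficulties must be controlled. The first, already present for $n=2$, is genuinely projective: support hyperplanes and ``sides'' only make sense after passing to an affine chart, so one must first produce a hyperplane missing the chosen components and putting them in convex position—this is what the hypotheses on the degree and the number of components secure. The second and harder difficulty is the existence, for $n=3$, of a common tangent plane to three components: unlike the planar bridge, this requires a genuine continuity or degree argument, and the singular-quadric hypothesis is precisely what reduces the relevant family of planes to a low enough dimension for such an argument to close. Verifying that the plane so obtained is real and tangent to all three chosen components is, I expect, the crux of the proof.
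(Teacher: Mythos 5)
Your proposal does not close the $n=3$ case, and that is a genuine gap rather than a routine verification. For tritangent planes you offer only a plan (``I would run a support/intermediate-value argument'' along the generators of the cone); no such argument is carried out, and it is precisely the step you yourself flag as the crux. The paper avoids this entirely by a single duality argument that treats $n=2$ and $n=3$ uniformly: let $\mathcal{H}$ be the set of hyperplanes meeting all $n$ chosen components, pass to the dual space $(\PP^n)^*$, show $\mathcal{H}^*$ is compact inside the affine chart complementary to $p^*$ for a suitable point $p$, and take an extremal point via Krein--Milman; the extremal hyperplane is shown to support every $\Omega_i$. The hypothesis you are not exploiting correctly is exactly the one that makes this work: the paper takes $p$ to be \emph{any point on one of the unused components} of $\xr$. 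Since a hyperplane meets each component in an even number of points (with multiplicity) and $\deg X=2n$, a hyperplane through $p$ already spends two intersection points on that extra component and therefore cannot meet all $n$ chosen ones --- this is the hypothesis of Theorem~\ref{thm.main}. In your write-up the ``at least $n+1$ components'' assumption is used only to exclude nesting, which misses its actual role and leaves you without the compactness/extremality mechanism.

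Even your $n=2$ argument, while morally sound, has unaddressed points: unnestedness of $\Omega_1,\Omega_2$ does not immediately give disjoint convex hulls, so the ``bridges'' of $\mathrm{conv}(C_1\cup C_2)$ need not visibly join a point of $\Omega_1$ to a point of $\Omega_2$ without further argument; and the existence of a line avoiding $\Omega_1\cup\Omega_2$ (to fix the affine chart) is asserted rather than derived. Both are repairable, but note that the paper's route makes them unnecessary. I would recommend reformulating your goal as finding an $n$-supporting hyperplane in the sense of Theorem~\ref{thm.main} and deriving its hypothesis from the even-intersection lemma plus the degree count; the tangency of a supporting hyperplane at a point of a smooth component is then the only geometric remark left to make.
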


Given a pair of embedded circles in the plane, it seems rather clear that a line tangent to each of them exists provided that the circles are unnested.  Anyway, finding a rigorous proof of this is not straightforward and we did not find proper reference in the literature. It's less obvious to find a tritangent conic to three embedded circles in a cone. More generally, we can wonder how to generalize the obvious necessary condition to be unnested in a more general setting and, even better we can seek for a necessary and sufficient condition.
We find a sufficient (but still not necessary) condition in a rather general setting. This is the main result of this short note  (Theorem~\ref{thm.main}) from which we derive easily Proposition~\ref{prop.multitangent} as a particular case. Sections 2 and 3 are devoted to the proof of  this theorem. In Section 3, we prove Proposition~\ref{prop.multitangent} and propose a conjecture with a sufficient condition weaker than Theorem~\ref{thm.main}. We refer to the cited references for the proofs of Theorems~\ref{thm.dp2} and \ref{thm.dp1}.

\section{Some reminders}

We start with some well-known definitions from convex geometry.

\begin{dfn}[Convex hull]
	Let $E$ be an Euclidean space of dimension $n$. A subset $A\subset E$ is called {\it convex} in $E$ if and only if for all $x,y\in A$ and every $t\in [0,1]$ we have
	\[
	tx+(1-t)y\in A,
	\]
	i.e. the line segment joining $x$ and $y$ is contained in $A$. The {\it convex hull} of a subset $A\subset E$  is the smallest (in the inclusion sense) convex subset of $E$ containing $A$.
\end{dfn}

\begin{dfn}[Extremal point]
	Let $E$ be an Euclidean space of dimension $n$ and $A\subset E$ be a subset. We say that a point $x\in A$ is an {\it extremal point} of $A$ if the convex hull of $A\setminus \{x\}$ is still convex.
\end{dfn}

\begin{thm}[\bf Krein-Milman]
	Every non-empty compact convex subset of a Euclidean space admits an extremal point.
\end{thm}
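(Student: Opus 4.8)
The plan is to avoid any abstract appeal to faces or transfinite induction and instead \emph{exhibit} an extremal point explicitly, exploiting the fact that in a finite-dimensional Euclidean space $E$ compactness guarantees that continuous functions attain their maxima. Let $A\subset E$ be non-empty, compact and convex. Since the map $x\mapsto \|x\|^2$ is continuous and $A$ is compact, it attains its maximum on $A$ at some point $x_0$, and I claim that this $x_0$ is extremal.

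The one structural input I would use is the strict convexity of the squared Euclidean norm, which here reduces to the elementary identity
\[
t\|y\|^2+(1-t)\|z\|^2-\|ty+(1-t)z\|^2 = t(1-t)\|y-z\|^2 ,
\]
valid for all $y,z\in E$ and $t\in[0,1]$. Suppose, for contradiction, that $x_0$ could be written as $x_0=ty+(1-t)z$ with $y,z\in A$, $y\neq z$ and $t\in(0,1)$. The identity above, being strictly positive in this case, gives
\[
\|x_0\|^2=\|ty+(1-t)z\|^2 < t\|y\|^2+(1-t)\|z\|^2 \leqslant \max(\|y\|^2,\|z\|^2)\leqslant \|x_0\|^2 ,
\]
the final inequality expressing that $x_0$ maximizes $\|\cdot\|^2$ on $A$. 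This is absurd, so no such decomposition exists.

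It then remains to reconcile this with the stated definition, which asks that removing $x_0$ keep the set convex; I expect this bookkeeping to be the only genuinely fiddly point. Concretely, I would verify that $A\setminus\{x_0\}$ is convex (so that, a fortiori, its convex hull is convex) by taking $p,q\in A\setminus\{x_0\}$ and a point $w=sp+(1-s)q$ on the segment joining them, and checking $w\neq x_0$: the cases $s\in\{0,1\}$ give $w\in\{p,q\}$, the case $p=q$ gives $w=p$, and the remaining case $s\in(0,1)$ with $p\neq q$ would exhibit $x_0$ as a non-trivial convex combination of two distinct points of $A$, contradicting the previous paragraph.

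I would stress that the indispensable hypothesis is the finite-dimensionality of $E$: it is precisely what makes the closed bounded set $A$ compact for the norm topology and hence guarantees that the maximum of $\|\cdot\|^2$ is achieved, which is the crux of the whole argument. In infinite dimensions the statement still holds but calls for a different route (weak-$*$ compactness combined with a Zorn's lemma exhaustion of minimal non-empty extremal faces); since only the Euclidean finite-dimensional case is needed in what follows, the elementary maximization argument above is entirely sufficient.
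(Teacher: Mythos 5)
Your proof is correct, but it is a genuinely different route from the paper's: the paper does not prove the theorem at all, it simply cites Bourbaki's general Krein--Milman theorem (valid in locally convex spaces, proved there via Zorn's lemma on extremal faces), whereas you give a short self-contained argument tailored to the finite-dimensional Euclidean case. Your maximization of $\|\cdot\|^2$ together with the strict-convexity identity $t\|y\|^2+(1-t)\|z\|^2-\|ty+(1-t)z\|^2=t(1-t)\|y-z\|^2$ is a standard and fully rigorous way to produce an extremal point, and your final paragraph correctly bridges to the paper's (somewhat oddly phrased) definition --- as literally written, ``the convex hull of $A\setminus\{x\}$ is still convex'' is vacuous, and you sensibly prove the intended stronger statement that $A\setminus\{x_0\}$ itself is convex, which is exactly what the paper uses later in the proof of Corollary~\ref{cor}. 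Two small remarks. First, your claim that finite-dimensionality is ``indispensable'' because it makes closed bounded sets compact is slightly misplaced: compactness of $A$ is a hypothesis of the statement, not something to be derived, and your argument actually works verbatim for any norm-compact convex subset of a Hilbert space (or of any strictly convex normed space); what finite-dimensionality buys is only that compact sets are plentiful. Second, what each approach buys: the citation gives the theorem in full generality at no cost in length, while your proof makes the note self-contained and elementary, at the price of proving only the Euclidean case --- which is, however, the only case the paper ever uses.
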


\begin{proof}
See for instance \cite[Chap. II.4 Th. 1]{Bou53}.
\end{proof}

\begin{cor}
\label{cor}
Every non-empty compact subset of a euclidean space admits an extremal point.
\end{cor}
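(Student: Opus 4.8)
The plan is to deduce the corollary from the Krein--Milman theorem by passing to the convex hull. Let $A$ be a non-empty compact subset of a Euclidean space $E$ of dimension $n$, and let $C$ denote the convex hull of $A$. The first step is to check that $C$ is again non-empty and compact. Non-emptiness is immediate. For compactness I would invoke Carathéodory's theorem, which writes every point of $C$ as a convex combination of at most $n+1$ points of $A$; concretely, $C$ is the image of the compact set $\Delta_n\times A^{n+1}$, where $\Delta_n=\{(t_0,\dots,t_n): t_i\geqslant 0,\ \sum_i t_i=1\}$, under the continuous map $\bigl((t_0,\dots,t_n),(a_0,\dots,a_n)\bigr)\mapsto \sum_{i=0}^n t_i a_i$. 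Since a continuous image of a compact set is compact, $C$ is compact.

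Next, $C$ is a non-empty compact convex subset of $E$, so the Krein--Milman theorem provides an extremal point $p$ of $C$. The crux of the argument is to show that $p$ already lies in $A$. By Carathéodory, $p$ admits a representation as a convex combination of points of $A$; since $p$ is extremal in $C$, it cannot be written as a \emph{nontrivial} convex combination of points of $C\supseteq A$, so this representation must be trivial, forcing $p$ to coincide with one of the generating points of $A$. Hence $p\in A$.

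Finally, I would verify that $p$ is an extremal point of $A$ in the sense of the definition above, i.e. that $p$ does not lie in the convex hull of $A\setminus\{p\}$. This follows because the convex hull of $A\setminus\{p\}$ is contained in $C$ and consists of convex combinations avoiding $p$; were $p$ to belong to it, $p$ would be a nontrivial convex combination of points of $C$ distinct from $p$, contradicting its extremality in $C$. The main obstacle I anticipate is not the Krein--Milman input itself but the bookkeeping that bridges the two notions of extremality --- the one attached to the convex set $C$ versus the one defined for the general set $A$ --- together with the use of Carathéodory to guarantee both the compactness of $C$ and the containment $p\in A$.
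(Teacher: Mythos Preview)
Your proof is correct and follows the same strategy as the paper: pass to the convex hull, apply Krein--Milman, and argue that the resulting extremal point already lies in $A$. The paper is terser---it omits the compactness check for the hull and deduces $p\in A$ directly from the minimality of the convex hull (if $p\notin A$ then $C\setminus\{p\}$ would be a strictly smaller convex set containing $A$) rather than via Carath\'eodory---but the approach is essentially the same.
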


\begin{proof}
Let $A$ be a non-empty compact subset of a Euclidean space. Let $A_{c}$ be the convex hull of $A$. By Krein-Milman, there exists an extremal point $x\in A_{c}$.
If $x\notin A$, then the convex set $A_{c}\setminus \{x\}$ contains $A$ and it is a strict subset of $A_{c}$, which contradicts $A_c$ being the convex hull of $A$. Therefore, $x\in A$.
\end{proof}

\section{$n$-supporting hyperplanes}

\begin{dfn}[Supporting hyperplane]
Let $H$ be a hyperplane of a Euclidean space $E$ given by the equation $l(x)=a$, where $l$ is a linear form and $a\in \mathbb{R}$. We denote by $H^{+}$ and $H^-$ the half-spaces
\[
H^+:=\{x\in E\mid l(x)\geq a\} \qquad H^-:=\{x\in E\mid l(x)\leq a\}.
\]
Let $A\subset E$ be a subset of $E$ and $x\in A$. We say that $H$ is a {\em supporting hyperplane} of $A$ in $x$ (or that {\em $H$ leans on $A$ in $x$}) if and only if the following hold:
\begin{enumerate}
\item $x\in A\cap H$
\item $A\subset H^+$ or $A\subset H^-$.
\end{enumerate}

If $A$ is a subset of $\pn(\mathbb{R})$ and $x\in A$, we say that $H$ leans on $A$ in $x$ if and only if there exists an affine chart $E$ of $\pn(\mathbb{R})$ such that $x\in E$ and $H$ leans on $A$ in $x$ inside $E$.
\end{dfn}

\begin{dfn}[$r$-supporting hyperplane]
Let $A_1,\dots,A_r$ be subsets of $\pn(\mathbb{R})$. We say that $H$ is a {\em hyperplane of $r$-support} of $A_1,\dots,A_r$ if there exist points $x_1\in A_1,x_2\in A_2,\dots, x_r\in A_r$ such that $H$ is a supporting hyperplane of $A_i$ in $x_i$ for all $1\leq i\leq r$.
\end{dfn}

\begin{thm}
\label{thm.main}
Let $n\in \mathbb{N}$ and let $A_1,\dots,A_n\subset \pn(\mathbb{R})$ be closed connected subsets of $\pn(\mathbb{R})$. Suppose that there exists a point $p\in\pn(\mathbb{R})$ such that no hyperplane passing through $p$ meets all the $A_i$. Then there exists an $n$-supporting hyperplane of $A_1,\dots,A_n$.
\end{thm}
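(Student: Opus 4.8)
My plan is to prove the statement by induction on $n$, using the central projection from $p$ to lower the dimension, and translating the notion of support into a dual picture. For each $i$, let $U_i$ denote the set of hyperplanes of $\pn(\RR)$ that meet $A_i$, regarded inside the space of all hyperplanes. The first thing I would establish is that, because $A_i$ is closed and \emph{connected}, a hyperplane $H$ supports $A_i$ at some point if and only if $H$ lies on the topological boundary $\partial U_i$: if $H$ met $A_i$ strictly on both sides, connectedness and the intermediate value theorem would force all nearby hyperplanes to meet $A_i$, so $H$ would be interior to $U_i$; conversely a supporting hyperplane is a limit of hyperplanes disjoint from $A_i$. Hence an $n$-supporting hyperplane is exactly a point of $\bigcap_{i=1}^{n}\partial U_i$. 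I would also observe that the hypothesis says precisely that $M:=\bigcap_i U_i$ avoids the locus of hyperplanes through $p$; as that locus is a hyperplane of the dual space, $M$ is a \emph{compact} subset of the complementary affine chart, and it is nonempty since one point chosen in each $A_i$ gives $n$ points lying on a common hyperplane. (After discarding the easy degenerate case, I may assume $p\notin\bigcup_i A_i$, so that projection from $p$ is defined on every $A_i$.)

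For the inductive step, let $\pi_p\colon \pn(\RR)\setminus\{p\}\to\PP^{n-1}(\RR)$ be the projection and put $B_i:=\pi_p(A_i)$, which are again closed and connected. Since hyperplanes through $p$ are exactly the preimages of hyperplanes of $\PP^{n-1}(\RR)$, and such a hyperplane meets $A_i$ iff its image meets $B_i$, the hypothesis becomes: no hyperplane of $\PP^{n-1}(\RR)$ meets all of $B_1,\dots,B_n$. I would then pick any $p'\in B_n$; a hyperplane of $\PP^{n-1}(\RR)$ through $p'$ already meets $B_n$, hence cannot meet all of $B_1,\dots,B_{n-1}$, so these $n-1$ sets together with $p'$ satisfy the hypothesis in dimension $n-1$. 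By induction they admit an $(n-1)$-supporting hyperplane $\bar H$, and I lift it to the hyperplane $H_0\ni p$ with $\pi_p(H_0\setminus\{p\})=\bar H$. Choosing, for each $i\le n-1$, the affine chart of $\pn(\RR)$ whose hyperplane at infinity is the cone from $p$ over the chart witnessing that $\bar H$ supports $B_i$, the projection becomes an affine surjection carrying a bounding half-space of $\bar H$ to a bounding half-space of $H_0$; thus $H_0$ supports each $A_i$ with $i\le n-1$. Because $H_0$ passes through $p$ and meets $A_1,\dots,A_{n-1}$, the hypothesis forces $H_0\cap A_n=\emptyset$. The base case $n=1$ is the statement that a proper closed connected subset of the circle $\PP^1(\RR)$ is a compact arc, and an extremal point of it (supplied by Corollary~\ref{cor}) is a point at which the corresponding point-hyperplane supports $A_1$.

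It remains to convert the ``wasted'' incidence of $H_0$ with $p$ into genuine contact with $A_n$. The idea is a \emph{first-contact} argument: starting from $H_0$, which misses $A_n$, I would move the hyperplane within the one-parameter family $S=\bigcap_{i=1}^{n-1}\partial U_i$ of common supporting hyperplanes of $A_1,\dots,A_{n-1}$ until it first meets $A_n$. At that first contact the hyperplane still supports $A_1,\dots,A_{n-1}$ and, being approached by hyperplanes disjoint from the connected set $A_n$, supports $A_n$ as well by the equivalence of the first paragraph; this is the sought $n$-supporting hyperplane.

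The step I expect to be the real obstacle is precisely this last deformation. One must show that $S$ is a well-behaved compact family joining $H_0$ (which sits on the hyperplane at infinity of the dual chart) to the interior of that chart, and --- most importantly --- that travelling along $S$ one \emph{does} eventually meet $A_n$ rather than missing it forever, which would occur if $A_n$ were screened off by the convex hulls of the other sets. Ruling this out is exactly where the full force of the hypothesis on $p$, the compactness coming from Corollary~\ref{cor}, and the connectedness of all the $A_i$ must be combined; by contrast the projection and lifting steps are essentially formal.
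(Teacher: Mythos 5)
Your dual-space setup is sound (a hyperplane supports the closed connected set $A_i$ exactly when it lies on $\partial U_i$, and the hypothesis on $p$ makes $\bigcap_i U_i$ compact in the affine chart of the dual space), and the projection-and-lift inductive step producing a hyperplane $H_0$ through $p$ that supports $A_1,\dots,A_{n-1}$ and misses $A_n$ is plausible as stated. But the proof has a genuine gap, and it is exactly the one you flag yourself at the end: the ``first-contact'' deformation from $H_0$ to an $n$-supporting hyperplane is never carried out. Nothing you have written shows that $S=\bigcap_{i=1}^{n-1}\partial U_i$ is a path, or even a connected set containing $H_0$ together with some hyperplane meeting $A_n$: the $\partial U_i$ are merely boundaries of closed sets in an $n$-dimensional dual space, so their common intersection can be badly disconnected, higher- or lower-dimensional near $H_0$, and there is no argument that travelling inside it one must ever enter $U_n$ rather than being screened off forever. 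Moreover, even granting a continuous path, the ``first contact'' point supports $A_n$ only if it is approached from outside $U_n$, which also needs proof. Since this step is where all the difficulty of the theorem is concentrated, the proposal does not constitute a proof.

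For comparison, the paper avoids any induction or deformation. It considers the set $\mathcal{H}^{*}$ of (duals of) hyperplanes meeting \emph{all} the $A_i$ at once, proves it is a nonempty compact subset of the affine chart $U_p$ complementary to $p^{*}$, and extracts an extremal point $H^{*}$ via Krein--Milman (Corollary~\ref{cor}). If $H$ failed to support some $A_i$, points of $A_i$ strictly on both sides of $H$ yield a segment $[x_1,x_2]$ every hyperplane through which meets $A_i$; sweeping hyperplanes through that segment and through one contact point in each other $A_j$ produces a dual segment $S^{*}\subset\mathcal{H}^{*}$ having $H^{*}$ in its interior, contradicting extremality. That single global argument delivers contact with all $n$ sets simultaneously, which is precisely what your construction postpones to the unproved final step. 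If you want to keep your inductive framework, you would still need an argument of comparable strength to close it; replacing the deformation by an extremal-point argument on $\bigcap_i U_i$ essentially reproduces the paper's proof.
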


\begin{proof}
We write $\PP=\pn(\mathbb{R})$ and $\ps=(\pn(\mathbb{R}))^{*}$ for the dual projective space.
To each hyperplane $H\subset\PP$ given by an equation $\sum\lambda_kx_k=0$, we associate the point $H^{*}:=(\lambda_0:\lambda_1:\dots:\lambda_n)$ in $\ps$. To each point $q\in \PP$ we associate the dual hyperplane $q^{*}:=\{H^{*}\mid q\in H\}$ in $\ps$.

The hypothesis that there exists a point $p\in\PP$ such that no hyperplane passing through $p$ meets all the $A_i$ implies that the $A_i$ are pairwise disjoint.
Let $\mathcal{H}$ be the set of hyperplanes in $\PP$ that meet all the $A_i$. Since there is a hyperplane through $n$ points in $\PP$, we see that $\mathcal{H}$ is non-empty.
Let $\mathcal{H}^{*}$ be the image of $\mathcal{H}$ in the dual space $\ps$ via the above correspondance. Since $p^{*}$ corresponds to the set of hyperplanes in $\PP$ passing through $p$, the set $\mathcal{H}^{*}$ is contained in the complement of the hyperplane $p^{*}$ in $\ps$.
Let $U_p$ be the open affine complement of $p^{*}$ in $\ps$.

\begin{lem}
\label{lem.Ucompact}
The set $\mathcal{H}^{*}$ is compact in $U_p$.
\end{lem}
\begin{proof}
For each $1\leq i\leq n$, let $\mathcal{H}_i$ be the set of hyperplanes that meet $A_i$. We have $\mathcal{H}^{*}=\cap_{i=1}^n (\mathcal{H}_i)^{*}$.
The set $A_i$ being closed implies that $(\mathcal{H}_i)^{*}$ is closed. We start by showing that the complement of $\mathcal{H}^{*}$ in $U_p$ is open.

Indeed,
the natural map $\RR^{n+1}\setminus\{0\}\to\PP, (x_0,x_1,\dots,x_n)\mapsto [x_0:x_1:\dots:x_n]$ induces a continuous double cover $\mathbb{S}^n\rightarrow \PP$. The inverse image $B_i$  of $A_i$ through this map is a closed subset in the unit sphere of $\mathbb{R}^{n+1}$.
If $H$ is an hyperplane in $\PP$ that does not meet $A_i$, then its preimage $H'$ is an hyperplane in $\mathbb{R}^{n+1}$ which does not meet $B_i$. The intersection $H'\cap \mathbb{S}^n$ is the unit sphere of dimension $n-1$ in $H'$ and in particular is closed in $\mathbb{S}^n$.

If $d>0$ is the distance between the two compacts $B_i$ and $H'$, we can take $U_i$ the subset of $\ps$ formed by the duals of hyperplanes whose traces on $\mathbb{S}^n$ are at distance less than $\frac12$ of $B_i$. Then $U_i\setminus\{p\}$ is open in $U_p$.

 This shows that the complement of $(\mathcal{H}_i)^{*}$ in $\ps$ is open.
It follows that $\mathcal{H}^{*}$ is closed in $\ps$.
Moreover, the set $\mathcal{H}^{*}$ is bounded in $U_p$ because it is closed and $\mathcal{H}^{*}\cap p^*=\varnothing$.
Hence $\mathcal{H}^{*}$ is compact in $U_p$.
\end{proof}

By Corollary~\ref{cor} of Krein-Milman and Lemma~\ref{lem.Ucompact}, the set $\mathcal{H}^{*}$ admits an extremal point $H^*$. Let us show that $H$ is an $n$-supporting hyperplane of $A_1,\dots,A_n$.

We proceed by contradiction and without loss of generality, we can suppose that $H$ does not support $A_1$. Since $H\in\mathcal{H}$, there exists for each $i=2,\dots,n$ a point $y_i\in A_i\cap H$.
Let $P_1$ be a hyperplane passing through $p$ and $y_2,\dots,y_n$ and recall that $P_1$ does not meet $A_1$ by hypothesis.
Since $H$ does not lean on $A_1$, it does not lean on $A_1$ in the affine chart $E=\PP\setminus P_1$.
We place ourselves inside $E$.
The hyperplane $H\cap E$ defines two half-spaces $H^+$ and $H^-$ in $E$ and there exists $x_1\in A_1\cap H^+\setminus H$ and $x_2\in A_1\cap H^-\setminus H$.

Let $S$ be the closed segment $[x_1,x_2]$ in $E$. It intersects $H$. Let us show that
\begin{equation}\label{1}
\text{any hyperplane in $E$ that meets $S$ also meets $A_1$.}
\end{equation}
Let $P$ be a hyperplane of $E$ meeting $S$. If it meets $S$ in $x_1$ or $x_2$, we are finished. Suppose that $P\cap S\subset]x_1,x_2[$ and $A_1\cap P=\varnothing$. Let $O^+=P^+\setminus P$ and $O^-=P^-\setminus P$. The sets $O^+$ and $O^-$ are open subsets of $E$ and $A_1\subset O^+\cup O^-$. The subspace $A_1$ being connected in $E$, we have $A_1\subset O^+$ or $A_1\subset O^-$. This is impossible because $x_1\in O^+$ and $x_2\in O^-$ (or the other way around). this ends the proof of \eqref{1}.

Let $y\in S$. Since $y_2,\dots,y_n$ are pairwise distinct and are not contained in $E$ (remember that $y_i\in A_i\cap P_1$ for $i\in\{2,\dots,n\}$ by definition of $P_1$) and $S\subset E$, there exists a hyperplane $H_y\subset\PP$ through $y,y_2,\dots,y_n$. The hyperplane $H_y$ is contained in $\mathcal{H}$ because it meets $A_1$ by property \eqref{1}. 

The points $y_2,\dots,y_n$ define a line $D$ in $\ps$ and we have $(H_y)^{*}\in D$. Therefore, the set of $(H_y)^{*}$, $y\in S$, is a closed segment $S^{*}$. It is contained in $U_p$, because $p\notin H_y$, and $S^*$ is contained in $\mathcal{H}^{*}$ as a consequence of \eqref{1}.
Let $y_0=S\cap H$, where $H^*$ is the extremal point of $\mathcal{H}^*$ from above. Then $H^{*}=(H_{y_0})^{*}$ is a point in the interior of $S^{*}$. It  is therefore contained in the convex hull of $\mathcal{H}^{*}$ and cannot be an extremal point, because we lose convexity if we take it away. Hence the contradiction.
\end{proof}

\section{Conclusion}

\begin{proof}[Proof of Proposition~\ref{prop.multitangent}]
First recall that any hyperplane meets any connected component of $\xr$ in an even number of intersection points, counted with multiplicity, see e.g. \cite[Lemma~2.7.8]{ma-book,ma-book-en}. Let $p$ be a point of
$\xr\setminus\cup\Omega _i$. By definition of the degree, a hyperplane passing through $p$ cannot meet $n$ other components
of $\xr$ because $X$ has degree $2n$ in $\PP^n$.

The conclusion follows from Theorem~\ref{thm.main}.
\end{proof}

Theorem~\ref{thm.main} is enough to prove Proposition~\ref{prop.multitangent}, but it's easy to see that the existence of a point $p$ such that no hyperplane passing through $p$ meets all the $A_i$ is not necessary. Take for example two intersecting circles in the plane: as in Theorem~\ref{thm.main}, these are two subsets in the $2$-dimensional plane, but by any point $p$, there is a line meeting the two circles. Anyway, there is clearly a line tangent to them.

We propose the following conjecture using a weaker sufficient condition (which can be applied to the former example):

\begin{conj}
Let $\{A_i\}_{1\leq i\leq n}$ be closed connected subsets contained in an
affine subset of $\pn(\mathbb{R})$.  Let $C_i$ be the union of all
$(n-2)$-dimensional linear subspaces $P\subset \pn(\mathbb{R})$ such that for
all $j \neq i$, $1 \leq j \leq n$, $P$ meets the convex hull of $A_j$. Assume
that for all $1 \leq i \leq n$, $A_i$ is not included in interior of $C_i$,
then there exists an $n$-supporting hyperplane of $A_1,\dots,A_n$.
\end{conj}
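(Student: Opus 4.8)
The plan is to rerun the dual, extremal-point argument of Theorem~\ref{thm.main} inside the affine chart that, by hypothesis, contains all the $A_i$. Write $\PP=\pn(\RR)$, let $K_i=\mathrm{conv}(A_i)$ be the (compact, convex) hulls, let $\mathcal{H}$ be the set of hyperplanes meeting every $A_i$, and let $\mathcal{H}^{*}\subset\ps$ be its dual image. The skeleton is unchanged: produce a suitable extremal point $H^{*}$ of $\mathcal{H}^{*}$ and, supposing $H$ fails to lean on some $A_1$, manufacture a genuine dual segment $S^{*}\subset\mathcal{H}^{*}$ with $H^{*}$ in its interior, contradicting extremality exactly through the connectedness argument~\eqref{1}.

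The role of the hypothesis on the $C_i$ becomes visible at the step where the affine chart $E=\PP\setminus P_1$ was built. There the contact points $y_2,\dots,y_n$ (with $y_j\in A_j\cap H$) span an $(n-2)$-plane $Q$, and since $y_j\in A_j\subset K_j$ for every $j\neq 1$, the very definition of $C_1$ gives $Q\subset C_1$. What the point $p$ furnished for free in Theorem~\ref{thm.main} — a hyperplane $P_1\supset Q$ with $P_1\cap A_1=\varnothing$ — must now be extracted from $A_1\not\subset\mathrm{int}(C_1)$. I would argue through the projection $\pi_Q\colon \PP\setminus Q\to\PP^1(\RR)$ whose fibres are the hyperplanes of the pencil through $Q$: since $A_1$ is connected, $\pi_Q(A_1)$ is an arc, and an avoiding hyperplane $P_1$ exists as soon as $\pi_Q(A_1)\neq\PP^1(\RR)$, after which the segment $S$, property~\eqref{1}, and the dual segment $S^{*}$ on the line dual to $Q$ transcribe verbatim. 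Note one new complication: unlike in Theorem~\ref{thm.main}, the $A_i$ need not be pairwise disjoint (two intersecting circles are the guiding example), so one must also arrange $Q\cap A_1=\varnothing$, e.g. by choosing the $y_j$ suitably, before $\pi_Q$ is even defined on all of $A_1$.

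The first genuine obstacle is compactness. In Theorem~\ref{thm.main} the hypothesis confined $\mathcal{H}^{*}$ to a single affine chart $U_p$, where Krein--Milman (Corollary~\ref{cor}) applies; under the present weaker hypothesis this fails. Two overlapping disks already show it: through every point $p$ there is a line meeting both disks, hence both circles, so $\mathcal{H}^{*}$ meets every dual hyperplane $p^{*}$ and is contained in no chart $U_p$. The space $\ps$ remains compact and $\mathcal{H}^{*}$ remains closed in it by the argument of Lemma~\ref{lem.Ucompact}, but $\ps$ carries no global convex structure, so an extremal point cannot be produced directly. I would therefore try to localize: use a boundary point $a_i\in A_i\setminus\mathrm{int}(C_i)$, guaranteed by the hypothesis, to select an affine chart of $\ps$ adapted to $a_i$ in which a relevant compact subfamily of $\mathcal{H}^{*}$ sits in convex position, and seek $H^{*}$ as an extremal point of that subfamily.

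The hard part — and, I expect, the reason the statement is only conjectural — is to make these two uses of the hypothesis cohere. Localizing to restore compactness effectively fixes where the sought hyperplane touches $A_i$, whereas the avoiding-hyperplane step needs $\pi_Q(A_1)\neq\PP^1(\RR)$ for the \emph{particular} $Q$ thrown up by the extremal point, and the pointwise statement $A_1\not\subset\mathrm{int}(C_1)$ does not obviously control that specific $Q$. Bridging ``one point of $A_i$ escapes $C_i$'' to ``the $(n-2)$-plane of contact points admits an $A_i$-avoiding hyperplane through it'' is the crux; I would attempt it by choosing the extremal $H^{*}$ so that its contact point on $A_i$ is forced into $A_i\setminus\mathrm{int}(C_i)$, turning the escape direction at $a_i$ into the required gap in $\pi_Q(A_i)$.
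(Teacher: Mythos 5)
This statement is left as an open conjecture in the paper: the authors give no proof of it, only a check that its hypothesis is strictly weaker than that of Theorem~\ref{thm.main} and an example showing it is still not necessary. So there is no argument of theirs to compare yours against, and your proposal --- as you essentially admit in your last paragraph --- does not supply one either. It is an accurate map of the obstacles rather than a route past them, and the two gaps you name are exactly the ones that remain open at the end of your text. First, compactness: without a point $p$ avoided by every hyperplane of $\mathcal{H}$, the dual set $\mathcal{H}^{*}$ need not sit inside any affine chart of $\ps$ (your two overlapping disks show this), so Corollary~\ref{cor} cannot be invoked and there is no global notion of extremal point to work with. Your suggestion to localize to a ``relevant compact subfamily in convex position'' is never carried out, and it faces a structural problem: even if such a subfamily has an extremal point $H^{*}$, the dual segment $S^{*}$ you would build to contradict extremality lives on the line dual to $Q$ and has no reason to stay inside the chosen subfamily, so extremality of $H^{*}$ relative to the subfamily yields no contradiction. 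Second, the hypothesis $A_1\not\subset\mathrm{int}(C_1)$ is a statement about \emph{some} point of $A_1$ escaping \emph{some} nearby $(n-2)$-planes, whereas the argument of Theorem~\ref{thm.main} needs an $A_1$-avoiding hyperplane through the \emph{specific} $(n-2)$-plane $Q$ spanned by the contact points of the candidate extremal hyperplane, i.e.\ $\pi_Q(A_1)\neq\PP^1(\RR)$ for that particular $Q$. Your closing idea --- steer the extremal point so that its contact on $A_1$ lands in $A_1\setminus\mathrm{int}(C_1)$ --- is stated as a hope, with no mechanism proposed for enforcing it; note also that a point of $A_1$ outside $\mathrm{int}(C_1)$ may still lie in $C_1$ itself, so even a well-placed contact point does not immediately produce the avoiding hyperplane.

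Two further points you touch on deserve emphasis, since they would have to be handled by any completed argument. The non-disjointness of the $A_i$ is not merely a technicality: if $Q$ meets $A_1$, the pencil projection $\pi_Q$ is undefined on part of $A_1$ and property~\eqref{1} must be reformulated. And the contact points $y_2,\dots,y_n$ need to be in general position for $Q$ to be a genuine $(n-2)$-plane and for its dual to be a line in $\ps$; under the weaker hypothesis nothing prevents degeneracies. In short: you have correctly diagnosed why the authors could not push their method through and left this as a conjecture, but a diagnosis is not a proof, and the statement remains unproven after your proposal.
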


Remark that this new sufficient condition is still unnecessary: consider three
disjoint spheres $A_1$, $A_2$ and $A_3$ with the same radius and whose center
are on the same line.  If $A_1$ is not the sphere in the middle it is in the
interior of the union of all lines meeting $A_2$ and $A_3$.

We can see that the sufficient condition of the conjecture is weaker than the
one of Theorem~\ref{thm.main}, by contraposition. If the condition of the
conjecture is not satisfied, then there exists $i$ such that $A_i$ is included
in the interior of the union of the $(n-2)$-dimensional linear subspaces
meeting each convex hull of $A_j$, $j\ne i$.
Then there exists a $(n-2)$-dimensional linear
subspace $P$ meeting each convex hull of $A_i$. Let $p\in \PP$, then the hyperplane generated
by $p$ and $P$ meet each  convex hull of $A_i$, hence each $A_i$ as they are connected,
which contradicts the condition of the theorem.

\medskip
We could also ask about the number of multi-tangent planes.

\begin{prop}
Under the conditions of Theorem~\ref{thm.main}, if each $A_i$ contains a non empty open subset, then there is at least $n+1$ distinct a $n$-supporting hyperplanes of $A_1,\dots,A_n$.
\end{prop}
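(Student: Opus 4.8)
My plan is to reuse the dual construction from the proof of Theorem~\ref{thm.main}. Recall that there one works with $\mathcal{H}^{*}$, the set of duals of the hyperplanes meeting every $A_i$, which Lemma~\ref{lem.Ucompact} shows to be a compact subset of the affine chart $U_p\cong\RR^n$. The first thing I would point out is that the mechanism of that proof is not tied to the single extremal point produced by Krein--Milman: the contradiction obtained there uses \emph{only} that the chosen point is not in the relative interior of a segment contained in $\mathcal{H}^{*}$. Hence \emph{every} extremal point of $\mathcal{H}^{*}$ yields an $n$-supporting hyperplane, and distinct extremal points clearly yield distinct hyperplanes. So the proposition reduces to showing that $\mathcal{H}^{*}$ has at least $n+1$ extremal points.

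To count them I would pass to the convex hull $K$ of $\mathcal{H}^{*}$, a compact convex subset of $\RR^n$. A full-dimensional compact convex body in $\RR^n$ has at least $n+1$ extreme points --- otherwise it would be the convex hull of at most $n$ points and hence lie in an affine subspace of dimension at most $n-1$ --- and each of its extreme points lies in $\mathcal{H}^{*}$ by the argument already used in Corollary~\ref{cor}, so is an extremal point of $\mathcal{H}^{*}$ in the sense above. Thus the real task is to prove that $K$ is full-dimensional, i.e. that $\mathcal{H}^{*}$ has non-empty interior in $U_p$. This is exactly where I would bring in the hypothesis that each $A_i$ contains a non-empty open subset $\Omega_i$: let $V_i\subset\ps$ be the set of duals of hyperplanes meeting $\Omega_i$; choosing $\omega_i\in\Omega_i$ and any hyperplane through $\omega_1,\dots,\omega_n$ (which exists, as $n$ points always lie on a common hyperplane) produces a point of $\bigcap_{i=1}^n V_i$. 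Since meeting $\Omega_i\subset A_i$ forces meeting $A_i$, this intersection lies in $\mathcal{H}^{*}\subset U_p$, so once I know each $V_i$ is open, $\bigcap_{i=1}^n V_i$ is a non-empty open subset of $\mathcal{H}^{*}$, giving full-dimensionality of $K$.

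The step I expect to be the main obstacle is precisely the openness of $V_i$, which is geometric rather than formal. I would argue that if a hyperplane $H$ meets $\Omega_i$ at a point $x$, then $\Omega_i$ contains a ball around $x$, and any hyperplane close enough to $H$ in $U_p$ passes within that ball --- its distance to $x$ tending to $0$ as it approaches $H$ --- and therefore still meets $\Omega_i$. Granting this, the remaining ingredients are all standard convexity: compactness of $K$, the extreme-point count for a full-dimensional body, and the location of the hull's extreme points inside $\mathcal{H}^{*}$. Assembling them, the $n+1$ extreme points of $K$ become $n+1$ extremal points of $\mathcal{H}^{*}$, hence $n+1$ distinct $n$-supporting hyperplanes of $A_1,\dots,A_n$.
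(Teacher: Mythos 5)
Your proposal is correct and follows essentially the same route as the paper: show that $\mathcal{H}^{*}$ has non-empty interior in $U_p$, conclude that it has at least $n+1$ extremal points (since the convex hull of at most $n$ points lies in an affine subspace of dimension $n-1$), and observe that the contradiction argument in the proof of Theorem~\ref{thm.main} applies to \emph{every} extremal point, each yielding a distinct $n$-supporting hyperplane. The only difference is that you supply an explicit justification (openness of the sets $V_i$ and non-emptiness of their intersection) for the step ``if each $A_i$ contains a non-empty open subset, so does $\mathcal{H}^{*}$,'' which the paper asserts without proof.
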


\begin{proof}
If each $A_i$ contains a non empty open subset, so does
$\mathcal{H}^*$. This implies that there is at least $n+1$ distinct extremal
points for $\mathcal{H}^*$. Indeed, if $\mathcal{H}^*$ as less than $n+1$ extremal points, it is
the convex-hull of its extremal points and therefore it is an hyperplane of
dimension at most $n-1$ hence does not contain any open set.
Then, the proof of theorem~\ref{thm.main} establishes that each extremal
points for $\mathcal{H}^*$ corresponds to a distinct $n$-supporting
hyperplanes.
\end{proof}

However, it seems that the conditions of this theorem implies that we have $2^n$ extremal points
(in dimension $2$: $4$ bitangent lines, $8$ in dimension~$3$, etc.) By going
either below or above each $A_i$. This suggest that $\mathcal{H}^*$ ressemble to a cube.
Moreover, all the examples we studied lead us to propose the following conjecture.
\begin{conj}
The main condition of Theorem~\ref{thm.main} is sufficient and necessary to have
$2^n$ multi-tangent planes when the $A_i$ are not thin (i.e. contain an open subset).
\end{conj}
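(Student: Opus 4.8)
The plan is to make rigorous the heuristic that, under the hypothesis of Theorem~\ref{thm.main}, the set $\mathcal{H}^*$ is combinatorially a cube whose $2^n$ vertices are precisely the multi-tangent hyperplanes. I would work in the affine chart $U_p$ and first reduce to the convex bodies $K_i:=\mathrm{conv}(A_i)$. This reduction is legitimate: a supporting hyperplane of $A_i$ at a point of $A_i$ is the same thing as a supporting hyperplane of $K_i$, and conversely every supporting hyperplane of the compact convex body $K_i$ touches it along a face whose extreme points are extreme points of $K_i$, hence lie in $A_i$, so no spurious tangencies are introduced. Non-thinness guarantees that each $K_i$ is full-dimensional. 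I would then parametrise oriented hyperplanes by $(u,b)\in\mathbb{S}^{n-1}\times\RR$ as $\{\langle u,x\rangle=b\}$ and encode tangency to $K_i$ through the support function via $b=h_{K_i}(u)$.

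Now comes the sign-pattern decomposition, which is the conceptual heart. For a sign vector $\sigma\in\{+,-\}^n$ put $\phi_i^\sigma(u)=h_{K_i}(u)$ if $\sigma_i=+$ and $\phi_i^\sigma(u)=-h_{K_i}(-u)$ if $\sigma_i=-$; a common tangent with $K_i$ on the side prescribed by $\sigma_i$ is a solution on $\mathbb{S}^{n-1}$ of the $n-1$ equations $\phi_1^\sigma(u)=\dots=\phi_n^\sigma(u)$. Reversing orientation identifies $\sigma$ with $-\sigma$, leaving $2^{n-1}$ genuinely distinct patterns. For the sufficiency direction I would prove that each pattern contributes exactly two solutions: existence of at least one via the compactness and extremal-point machinery of Theorem~\ref{thm.main} run inside the relevant sign stratum, and the exact count $2$ via a degree argument, using the antipodal relation $F^\sigma(-u)=-F^{-\sigma}(u)$ for $F^\sigma=(\phi_2^\sigma-\phi_1^\sigma,\dots,\phi_n^\sigma-\phi_1^\sigma)$ together with strict convexity of the $K_i$ to force transversality. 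Summing $2$ over the $2^{n-1}$ patterns yields $2^n$, recovering the cases $4$ in dimension $2$ and $8$ in dimension $3$.

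For the necessity direction I would argue by contraposition. If the hypothesis of Theorem~\ref{thm.main} fails, then, exactly as in the contrapositive computation following the first conjecture, there is an $(n-2)$-dimensional linear subspace $P$ meeting every $K_j$; the whole pencil of hyperplanes through $P$ then meets all the $A_i$, so $\mathcal{H}^*$ contains a projective line's worth of points and a full edge of the would-be cube collapses. I would show that such a flat forces at least one antipodal pair of sign-patterns to lose its two transverse solutions, the solutions either merging or becoming non-real, exactly as the two internal common tangents of a pair of convex bodies disappear when the bodies become nested; hence strictly fewer than $2^n$ multi-tangent hyperplanes survive. The three collinear spheres of the preceding remark would serve as the model degeneration to calibrate this step.

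The main obstacle, and the reason this remains open, is the exact count in the sufficiency direction. The Krein--Milman argument of Theorem~\ref{thm.main} and the counting Proposition above only produce lower bounds, namely existence of a vertex and at least $n+1$ of them, whereas fixing the number at exactly $2^n$ requires a matching upper bound: one must show that within each sign stratum the system $F^\sigma=0$ has no spurious extra zeros, i.e.\ a global transversality and degree statement for differences of support functions on $\mathbb{S}^{n-1}$. This is delicate precisely because a general convex body need be neither smooth nor strictly convex, so $h_{K_i}$ is only Lipschitz and the zero set of $F^\sigma$ may fail to be transverse, or even finite, without a perturbation and limiting argument; controlling how the individual stratum counts assemble globally into exactly $2^n$, while tracking their behaviour across the degenerations governing the necessity direction, is where the real work lies.
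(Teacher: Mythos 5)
The statement you are addressing is posed in the paper as an open conjecture: the paper offers no proof of it, only the heuristic that $\mathcal{H}^*$ should ``resemble a cube'' with one vertex per choice of side for each $A_i$. So the only question is whether your proposal actually closes the problem, and it does not — you say as much yourself in your final paragraph. Concretely, in the sufficiency direction your sign-pattern decomposition (the $2^{n-1}$ classes $\sigma\sim-\sigma$, each meant to contribute exactly two solutions of $F^\sigma=0$ on $\mathbb{S}^{n-1}$) is a reasonable formalisation of the paper's heuristic, but both halves of the count are missing: existence of a solution \emph{in each prescribed sign stratum} is only asserted (Theorem~\ref{thm.main} produces one extremal point of $\mathcal{H}^*$, and the paper's counting proposition only produces $n+1$, with no control over which sign patterns they realise), and the upper bound — no stratum contributes more than two — is exactly the hard part, since support functions of general convex bodies are merely Lipschitz and the zero set of $F^\sigma$ need not be transverse or even finite without a perturbation and limiting argument that you do not supply. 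There is also a preliminary gap: to form $K_i=\mathrm{conv}(A_i)$ you need all the $A_i$ to lie in a common affine chart of $\pn(\RR)$; the hypothesis easily gives, for each $i$ separately, a hyperplane missing $A_i$, but a common chart requires an argument.

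The necessity direction has a more serious defect. Its first step asserts that failure of the hypothesis of Theorem~\ref{thm.main} yields an $(n-2)$-dimensional linear subspace meeting every $\mathrm{conv}(A_j)$; the paper only proves the \emph{reverse} implication in the contrapositive remark after the first conjecture, and you give no argument for the direction you need. Worse, your intended conclusion — ``strictly fewer than $2^n$ multi-tangent hyperplanes survive'' — is contradicted by the very example you propose as calibration: for three disjoint equal spheres with collinear centers the hypothesis of Theorem~\ref{thm.main} fails (for any $p$, the plane spanned by $p$ and the line of centers meets all three), yet the common tangent planes include the entire one-parameter family of planes parallel to that line at distance equal to the common radius, so there are \emph{infinitely many} $3$-supporting hyperplanes, not fewer than $8$. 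A correct contrapositive must rule out ``exactly $2^n$'' by showing the count is either smaller or larger (possibly infinite), and nothing in your sketch distinguishes these outcomes. In short, what you have is a plausible research plan consistent with the paper's own speculation, not a proof; the statement should remain a conjecture.
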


\bibliographystyle{amsalpha}
\bibliography{biblio-tvar,biblio-perso}

\end{document}